\newtheorem{theorem}{Theorem}[section]
\newtheorem{lem}[theorem]{Lemma}
\newtheorem{proposition}[theorem]{Proposition}
\newtheorem{cor}[theorem]{Corollary}
\newtheorem{question}[theorem]{Question}
\theoremstyle{definition}
\newtheorem{dfn}[theorem]{Definition}
\newtheorem{ex}[theorem]{Example}
\newtheorem{ntn}[theorem]{Notation}
\numberwithin{theorem}{section}
\newenvironment{theorem_no_number}[1][]{\begin{trivlist}
\item[\hskip \labelsep {\bfseries Theorem \def\temp{#1}\ifx\temp\empty  #1\else  #1\fi
.}] \itshape}  {\end{trivlist}}
\newenvironment{cor_no_number}[1][]{\begin{trivlist}
\item[\hskip \labelsep {\bfseries Corollary \def\temp{#1}\ifx\temp\empty  #1\else  #1\fi
.}] \itshape}  {\end{trivlist}}
\numberwithin{theorem}{section}
\DeclareMathOperator{\supt}{supt}
\DeclareMathOperator{\expsum}{expsum}
\DeclareMathOperator{\Mon}{Mon}
\newcommand{\Z}{\mathbb{Z}}
\newcommand{\N}{\mathbb{Z}_{\geq 0}}
\newenvironment{AL}{\noindent\color{red} AL:}{}
\newcommand{\makeset}[2]{\left\lbrace #1 \;\middle|\;
  \begin{tabular}{@{}l@{}}
    #2
   \end{tabular}
  \right\rbrace}
\title{The Diophantine problem in Thompson's group \(F\)}
\author{Luna Elliott and Alex Levine}
\subjclass[2020]{20F10, 20F65}
\keywords{Thompson's group \(F\), equations in groups}
\begin{document}

\begin{abstract}
    We show that the Diophantine problem in Thompson's group \(F\) is undecidable. Our
    proof uses the facts that \(F\) has finite commutator width and rank \(2\)
    abelianisation, then uses similar arguments used by Büchi and Senger and Ciobanu
    and Garreta to show the Diophantine problem in free groups and monoids with abelianisation
    constraints is undecidable. 
\end{abstract}

\maketitle
\section{Introduction}
The \emph{Diophantine problem} in a group \(G\) is the decision question which takes as input a
finite system \(\mathcal E\) of equations in \(G\) and outputs whether or not \(\mathcal E\) admits a solution. One of the first major breakthroughs was made by Makanin, when he proved that the Diophantine problem in free groups was decidable \cite{Makanin_eqns_free_group, Makanin_semigroups, Makanin_systems}. Decidability for torsion-free hyperbolic groups followed in \cite{RipsSela}, and generic hyperbolic groups in \cite{DahmaniGuirardel}. In addition to these, the decidability of the Diophantine problem has been shown for right-angled Artin groups \cite{DiekertMuscholl}, virtually direct products of hyperbolic groups \cite{CiobanuHoltRees} and free products of free and finite groups
\cite{DiekertLohrey}, amongst others. On the other hand, Roman'kov gave an example of a nilpotent group with undecidable
Diophantine problem \cite{Romankov}, with Duchin, Liang and Shapiro proving that all free nilpotent groups of rank and class at least \(2\) have an undecidable Diophantine problem \cite{DuchinLiangShapiro}. `Many' random nilpotent groups have an undecidable Diophantine problem \cite{GarretaMiasnikovOvchinnikov17}. Non-abelian free metabelian groups have undecidable Diophantine problem \cite{Romankov79}, as do many groups defined by metabelian presentations \cite{GarretaMiasnikovOvchinnikov21}. Recently, Dong has shown that the Diophantine problem in the restricted wreath product \(\mathbb{Z} \wr \mathbb{Z}\) is undecidable \cite{Dong}.

The purpose of this paper is to show that the Diophantine problem is undecidable in Thompson's group \(F\). Thompson's group \(F\) is a finitely presented group of piecewise linear homomorphisms of
the unit interval, which has received significant attention since its introduction by Richard
Thompson \cite{Thompson}. Whilst \(F\) is not simple, it was introduced alongside Thompson's groups
\(T\) and \(V\), which were the first examples of infinite, finitely presented simple groups, and
the three groups exhibit certain common behaviour. Thompson's group \(F\) has a decidable conjugacy
problem (since it is a diagram group \cite{GubaSapir97}, see also \cite{BelkMatucci}), and so this article adds it to the known cases of groups with decidable conjugacy problem,
but undecidable Diophantine problem. Our proof makes use of the fact that the abelianisation of
\(F\) is \(\mathbb{Z}^2\) \cite{CannonFloydParry}, and that every element of the commutator subgroup of \(F\) is a product of at most \(2\) commutators \cite{commwidth}. Further properties of \(F\) include that
the commutator subgroup is simple, the growth rate is exponential and that it does not contain
a non-abelian free subgroup \cite{Thompson, CannonFloydParry}. The last, together with the fact that \(F\) is not
elementary amenable, has led to much interest in whether \(F\) is either amenable or non-amenable. Thompson's group \(F\) also has a deep relationship with the concept of associativity \cite{Dehornoy2005, Dehornoy96, McKenzieThompson, Brin05}.

 \begin{theorem}\label{main-thm}
        The Diophantine problem in Thompson's group \(F\) is undecidable.
\end{theorem}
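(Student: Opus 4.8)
The plan is to reduce a known undecidable Diophantine problem — the Diophantine problem over $\mathbb{Z}$ (Hilbert's tenth problem, Matiyasevich) — to the Diophantine problem in $F$, passing through the intermediate problem of solving equations in a free group subject to \emph{abelianisation constraints}, in the style of Büchi--Senger and Ciobanu--Garreta. The first step is to recall (or re-prove) that the following problem is undecidable: given a finite system of equations over a free group $\Phi$ of rank $2$ together with a finite system of linear equations that the abelianisations of the variables must satisfy in $\mathbb{Z}^2 = \Phi^{\mathrm{ab}}$, decide whether there is a solution. This follows because one can encode polynomial relations over $\mathbb{Z}$ using the fact that in a free group one can express, via commutators, the condition that certain exponent sums multiply; the abelianisation constraints let one read off and pin down these exponent sums, so multiplication of integers becomes expressible, and then Matiyasevich's theorem applies. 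I would state this as a lemma and either cite the relevant papers or sketch the encoding.

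The second and central step is to transfer such a constrained system from the free group $\Phi$ into $F$. Here the key structural facts about $F$ are used: $F^{\mathrm{ab}} \cong \mathbb{Z}^2$ via an explicit homomorphism $\pi \colon F \to \mathbb{Z}^2$, and the commutator subgroup $F' = [F,F]$ has commutator width $2$, i.e.\ every element of $F'$ is a product of at most two commutators. I would pick elements $a, b \in F$ whose images $\pi(a), \pi(b)$ form a basis of $\mathbb{Z}^2$, so that for any word $w$ in the variables, the abelianisation $\pi$ of the corresponding element of $F$ is computed exactly as the abelianisation in $\Phi = \langle a, b\rangle$ (as a free group on these symbols, noting $a,b$ generate a subgroup that need not be free — this is the subtlety). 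To simulate an equation $u(X_1,\dots,X_n) = 1$ of the free group together with abelianisation constraints, I would introduce the variables $X_1, \dots, X_n$ ranging over $F$, impose the abelianisation constraints directly as equations in $F$ (which is legitimate since linear constraints on $\pi(X_i)$ can be rewritten: $\pi(X) = (p,q)$ is equivalent to $X a^{-p} b^{-q} \in F'$, and membership in $F'$ is Diophantine because $F'$ is exactly the set of products of two commutators, so $X a^{-p} b^{-q} = [Y_1,Z_1][Y_2,Z_2]$ for fresh variables), and then handle the word equation $u = 1$ by the same trick combined with the requirement that the "free part" vanish.

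The main obstacle — and the place where the argument must be done carefully — is that $F$ is \emph{not} free, so an equation $u(X_1,\dots,X_n)=1$ that is unsolvable in the free group $\Phi$ might become solvable in $F$, collapsing the reduction. The resolution, following the cited approach, is that the encoding should only use equations of a special shape whose solvability in $F$ is controlled purely by the abelianisation data: rather than transporting arbitrary free-group equations, one transports the \emph{constraint system} itself, arranging that the hard content lives entirely in the exponent-sum (abelianisation) bookkeeping and that the non-abelian part of each equation is always individually satisfiable in $F$ by using the width-$2$ commutator presentation of $F'$ to realise any prescribed element of $F'$. Concretely, I expect the reduction to show: a system of linear/polynomial conditions on integers $\pi(X_i)$ has an integer solution if and only if the associated system of equations over $F$ (each asserting that some product of the $X_i$ and constants lies in $F'$, expanded via two-commutator products) has a solution in $F$. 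Verifying this equivalence in both directions — soundness (an $F$-solution yields an integer solution by applying $\pi$) and completeness (an integer solution can be lifted to an $F$-solution, using surjectivity of $\pi$ and the commutator-width bound to fill in the commutator variables) — is the technical heart, and I would expect the completeness direction, where one must simultaneously satisfy all equations, to require the most care.
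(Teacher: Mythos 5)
Your high-level strategy coincides with the paper's: reduce Hilbert's tenth problem, using the rank-\(2\) abelianisation of \(F\), the commutator width \(2\) of \(F\) (so that membership in \([F,F]\), and hence any linear exponent-sum constraint, is equationally definable), and the Büchi--Senger/Ciobanu--Garreta style of encoding. However, there is a genuine gap at the heart of the argument: you never say how \emph{multiplication} of integers is captured inside \(F\), and the workaround you propose for the non-freeness of \(F\) would destroy exactly the mechanism that is needed. If, as you suggest, the transported system is arranged so that ``the non-abelian part of each equation is always individually satisfiable'' and ``the hard content lives entirely in the exponent-sum bookkeeping'', then solvability of the whole system is governed by a system of \emph{linear} conditions on elements of \(\mathbb{Z}^2\); that problem is decidable, so no undecidability can come out of it. Multiplication cannot be read off from the abelianisation alone, and the free-group version of the multiplication trick that you allude to relies on centralisers of non-trivial elements being cyclic --- a fact that does not transfer to \(F\) for free and is never addressed in your proposal.

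The missing step is the one place where the non-abelian structure of \(F\) must be used in an essential but \emph{controlled} way: one needs the sets \(\Mon\langle x_0\rangle\), \(\Mon\langle x_1\rangle\) and, crucially, \(\{(r,s) \mid r \in \Mon\langle x_1\rangle,\ s \in \Mon\langle x_0 r\rangle\}\) to be equationally definable in \(F\), because for \(r = x_1^k\) and \(s = (x_0 x_1^k)^l\) one has \(\expsum_{x_0}(s) = l\) and \(\expsum_{x_1}(s) = kl\), which is how the product \(kl\) becomes accessible to exponent-sum constraints. Establishing this definability is not abelianisation bookkeeping: it uses the description of centralisers in \(F\) (each \(x_0 x_1^k\) with \(k \geq 0\) has support \((0,1)\) and derivative \(2\) at \(0\), so it is not a proper power and generates its own centraliser, which is cyclic), together with a separate support argument --- commutation with an auxiliary element supported on \((0,\tfrac12)\) --- to cut the cyclic group down to the non-negative powers. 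None of this appears in your proposal, and without it the reduction does not get off the ground. A smaller remark: your concern that an equation unsolvable in the free group might become solvable in \(F\) is moot in the paper's approach, since no free-group word equations are transported at all; the only non-abelian equations used are commutation relations whose solution sets in \(F\) are computed exactly from the centraliser structure.
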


James Belk and Corentin Bodart both independently pointed out to us that Thompson's group \(F\) can be realised as a centraliser in Thompson's group \(T\), giving the undecidability of the
Diophantine problem in Thompson's group \(T\). Some arguments in a similar direction were used
by Lasserre when discussing the first-order theories of \(F\) and \(T\) \cite{Lasserre}, although these do not transfer to Diophantine problems. 

\begin{cor}\label{main-cor}
    The Diophantine problem in Thompson's group \(T\) is undecidable.
\end{cor}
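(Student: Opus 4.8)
The plan is to reduce the Diophantine problem in \(F\) (\cref{main-thm}) to that in \(T\), so that undecidability transfers. The mechanism is a standard observation about equationally defined subgroups: if \(G\) is a group and \(c\in G\) is an element whose centraliser \(C_G(c)\) splits as a direct product \(H\times L\), then undecidability of the Diophantine problem in \(H\) (with constants from \(H\)) implies undecidability in \(G\). Indeed, given a finite system \(\mathcal E\) of equations over \(H\) in variables \(x_1,\dots,x_n\), identify \(H\) with \(H\times\{1\}\le C_G(c)\le G\), rewrite the constants of \(\mathcal E\) accordingly, and form the finite system \(\mathcal E'\) over \(G\) by adjoining the equations \(x_jc=cx_j\) for \(1\le j\le n\). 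Any solution of \(\mathcal E'\) in \(G\) has all \(x_j\in C_G(c)=H\times L\); applying the projection \(\pi\colon H\times L\to H\) — a homomorphism fixing \(H\) pointwise, hence fixing every constant of \(\mathcal E\) — turns it into a solution of \(\mathcal E\) in \(H\), and conversely any solution of \(\mathcal E\) in \(H\) already solves \(\mathcal E'\) in \(G\). So a decision procedure for \(G\) would give one for \(H\). (Only finitely many explicit equations are added, so this is a genuine reduction; it works just as well for the centraliser of a finite subset of \(G\).)

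It remains to produce \(c\in T\) with \(C_T(c)\cong F\times(\text{something})\). Present \(T\) as the group of orientation-preserving piecewise-linear self-homeomorphisms of \(\mathbb R/\mathbb Z\) with finitely many breakpoints, all dyadic, and all slopes powers of two, and let \(A=[0,\tfrac12]\), \(B=[\tfrac12,1]\) be the two dyadic half-arcs. Let \(c\in T\) be the identity on \(A\) and, on \(B\), a fixed-point-free one-bump element fixing the endpoints of \(B\) — for instance an affinely rescaled copy of the standard generator \(x_0\) of \(F\); this lies in \(T\) and has \(\Fix(c)=A\). Then any \(g\in C_T(c)\) satisfies \(g(A)=g(\Fix(c))=\Fix(c)=A\); since \(g\) preserves orientation it cannot interchange the two endpoints of the proper arc \(A\), so it fixes both, and hence also preserves \(B\). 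Such a \(g\) is thus determined by \(g|_A\) and \(g|_B\): the restriction \(g|_A\) is an arbitrary element of the copy \(F_A\cong F\) of \(F\) supported on \(A\) (automatically commuting with \(c|_A=\mathrm{id}\)), and \(g|_B\) is an arbitrary element of \(C_{F_B}(c|_B)\). Hence \(C_T(c)\cong F_A\times C_{F_B}(c|_B)\cong F\times C_{F_B}(c|_B)\), which with the previous paragraph and \cref{main-thm} gives the corollary.

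I do not expect a serious obstacle; the only step needing any care is the computation of \(C_T(c)\), and there the substantive points are merely that the affine rescaling \(B\cong[0,1]\) preserves the dyadic-breakpoint and power-of-two-slope structure (it doubles), and that an orientation-preserving homeomorphism of the circle fixing a proper arc setwise must fix its endpoints. One need not identify \(C_{F_B}(c|_B)\) at all. Finally, as James Belk and Corentin Bodart observed, a small variant realises \(F\) as an \emph{exact} centraliser in \(T\): replacing \(c\) by the two elements \(c_1,c_2\in T\) that are the identity on \(A\) and equal to the standard generators \(x_0,x_1\) of \(F\) on \(B\), the same analysis gives \(C_T(c_1)\cap C_T(c_2)\cong F_A\times Z(F_B)\cong F\), since \(F\cong F_B=\langle x_0,x_1\rangle\) has trivial centre; then no projection is needed in the reduction.
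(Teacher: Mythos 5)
Your proposal is correct and is essentially the paper's argument: the paper realises \(F\) as the centraliser in \(T\) of the two elements \(x_0',x_1'\) generating a copy of \(F\) supported on one half-arc (every element of \(T\) commuting with them must fix \(0\) and \(\tfrac12\), so the centraliser in \(T\) coincides with the centraliser in \(F\), which is the complementary copy of \(F\)), exactly the ``small variant'' you describe in your closing sentences. Your main route --- a single element \(c\), the decomposition \(C_T(c)\cong F\times L\), and the projection trick to finish the reduction --- is a harmless generalisation of the same idea and is also sound.
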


Whilst the conjugacy problem is a special case of the Diophantine problem, as it can be decided using
the equation \(X^{-1}gXh^{-1} = 1\), one can also generalise equations to generic first-order sentences. The weaker statement that the first-order theory of \(F\) is undecidable was first proved by Bardakov and Tolstykh \cite{BardakovTolstykh}. Their method of proving that \(F\) has an undecidable first-order theory involved
reducing the question to the first-order theory of the restricted wreath product \(\Z \wr \Z\). Given
that \(\Z \wr \Z\) has recently been shown to have an undecidable Diophantine problem \cite{Dong}, one might
hope that this proof can be adapted to reduce the Diophantine problem of  \(F\) to that of \(\Z \wr \Z\). The proof requires the fact that centralisers of
first-order definable sets are themselves first-order definable. Centralisers of some subsets Badakov and Tolstykh use can be defined using equations as well, however, it is not clear if all
such sets can be, and so we use an alternative approach.

The Diophantine problem with abelianisation constraints in non-commutative free monoids was shown to be
undecidable by Büchi and Senger \cite{BuchiSenger}. Using the fact that every element of the commutator
subgroup of Thompson's group \(F\) is a product of at most two commutators \cite{commwidth}, we can simulate abelianisation constraints in \(F\) using
systems of equations. We can then employ a similar argument to \cite{BuchiSenger, CiobanuGarreta}, which uses the fact that
the abelianisation of \(F\) is \(\Z^2\), to show that
the Diophantine problem in \(F\) is undecidable. The argument involves reducing Hilbert's tenth
problem over the non-negative integers to the Diophantine problem of \(F\), and thus
Matijasevic's famous negative answer to Hilbert's tenth problem \cite{Matijasevic70} completes the proof. Naturally, there are many classes of groups where the Diophantine problem with
abelianisation constraints is decidable, such as virtually abelian groups \cite{CiobanuEvettsLevine}, or hyperbolic groups
with finite abelianisation \cite{CiobanuGarreta}, so this approach will not work for all infinite groups with finite commutator width. In particular, since Thompson's group \(V\) has a trivial
abelianisation, there is no reason to expect any argument in this direction for \(V\)
will yield results.

\section{Preliminaries}
 We will use right actions throughout. That is, we write \((x)f\) as opposed to \(f(x)\) when applying a function \(f\) to a point \(x\), and compose functions from left to right.
\subsection{Thompson's groups \(F\) and \(T\)}
    We give a brief introduction to Thompson's groups \(F\) and \(T\). We refer the reader to \cite{CannonFloydParry} for more details.

    \begin{dfn}\label{FDef}
        \emph{Thompson's group \(F\)} is the group of piecewise linear homeomorphisms from
        \([0, 1]\) to itself, differentiable everywhere except at finitely many dyadic rational
        numbers, and such that the derivative on every differentiable subinterval is always
        an integer power of \(2\).

        If \(f \in F\) the \emph{support} of \(f\) is the set \(\supt(f) = \{x \in [0, 1] \mid (x)f \neq x\}\).
    \end{dfn}

    \begin{ex}
        Consider the elements \(x_0\) and \(x_1\) of Thompson's group \(F\), defined by
        \[
        (p)x_0 = \left\{
                \begin{array}{cl}
                   2p  & p \in [0, \frac{1}{4}] \\
                   p+\frac{1}{4}  & p \in [\frac{1}{4}, \frac{1}{2}] \\
                  \frac{p}{2}+\frac{1}{2} & p \in [\frac{1}{2}, 1],
                \end{array}
            \right.
            \qquad
            (p)x_1 = \left\{
                \begin{array}{cl}
                  p & p \in [0, \frac{1}{2}] \\
                  2p-\frac{1}{2}  & p \in [\frac{1}{2}, \frac{5}{8}] \\
                   p+\frac{1}{8}  & p \in [\frac{5}{8}, \frac{3}{4}] \\
                  \frac{p}{2}+\frac{1}{2} & p \in [\frac{3}{4}, 1],
                \end{array}
            \right.
        \]
        In these cases, \(\supt(x_0) = (0, 1)\) and \(\supt(x_1) = (\frac{1}{2}, 1)\).
    \end{ex}

    \begin{theorem}[{\cite[Corollary 2.6 and proof of Theorem 4.1]{CannonFloydParry}}]\label{x0andx1gen}
        \label{F:abelianisation}
        Thompsons's group \(F\) is generated by \(x_0\) and \(x_1\). Moreover,
        the abelianisation of \(F\) is \(\Z^2\), with
        the abelianisation map taking \(x_0\) and \(x_1\) to the
        generators of \(\mathbb{Z}^2\).
    \end{theorem}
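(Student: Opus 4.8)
The plan is to establish the two assertions separately, in both cases leaning on the standard infinite presentation
\[
F \;\cong\; \langle\, x_0, x_1, x_2, \dots \;\mid\; x_n^{-1} x_k x_n = x_{k+1}\ \text{ for } k > n \geq 0 \,\rangle
\]
of \cite{CannonFloydParry}, in which \(x_0\) and \(x_1\) are the homeomorphisms of the example. For generation, I would specialise the relations to \(n = 0\), obtaining \(x_0^{-1} x_k x_0 = x_{k+1}\) for every \(k \geq 1\); an immediate induction then gives \(x_n = x_0^{-(n-1)} x_1 x_0^{n-1}\) for all \(n \geq 2\), so that \(\{x_0, x_1\}\) already generates \(F\). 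If one prefers not to quote the infinite presentation, the same conclusion can be read off the tree-pair-diagram description of \(F\): every element is a pair of finite rooted binary trees with the same number of leaves, and an induction on the number of leaves writes any such pair as a product of the elementary pairs realising \(x_0^{\pm 1}\) and \(x_1^{\pm 1}\).

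For the abelianisation, I would abelianise the presentation directly: each relator \(x_k x_n x_{k+1}^{-1} x_n^{-1}\) (\(k > n \geq 0\)) collapses to \(\bar x_k = \bar x_{k+1}\), so that \(\bar x_1 = \bar x_2 = \cdots\) while no relation links \(\bar x_0\) to \(\bar x_1\); hence \(F^{\mathrm{ab}}\) is free abelian on \(\bar x_0\) and \(\bar x_1\), which is exactly the claim. As a cross-check not relying on the presentation, note that the first part makes \(F^{\mathrm{ab}}\) a quotient of \(\Z^2\), while \(\alpha(f) = \log_2 f'(0^+)\) and \(\beta(f) = \log_2 f'(1^-)\) (base-\(2\) logarithms of the one-sided slopes of \(f\) at the two endpoints, which are integers by the definition of \(F\)) are homomorphisms \(F \to \Z\): using right actions, slopes at a common fixed endpoint multiply under composition. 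The example gives \((\alpha, \beta)(x_0) = (1, -1)\) and \((\alpha, \beta)(x_1) = (0, -1)\), and the matrix with rows \((1,-1)\) and \((0,-1)\) has determinant \(-1\), so these form a \(\Z\)-basis of \(\Z^2\) and \((\alpha, \beta) \colon F \to \Z^2\) is onto. Since a surjective endomorphism of \(\Z^2\) is an isomorphism, the composite \(\Z^2 \twoheadrightarrow F^{\mathrm{ab}} \twoheadrightarrow \Z^2\) forces \(F^{\mathrm{ab}} \cong \Z^2\), and post-composing \((\alpha,\beta)\) with the automorphism of \(\Z^2\) carrying \((1,-1)\) and \((0,-1)\) to the standard basis realises the abelianisation map sending \(x_0\) and \(x_1\) to the generators of \(\Z^2\).

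The only place with genuine content is the generation step — equivalently, the correctness of the infinite presentation (or of the tree-pair calculus), which rests on a normal-form theorem for elements of \(F\). I would simply cite \cite{CannonFloydParry} for this; granting it, the remainder of the theorem is the short endpoint-slope computation above.
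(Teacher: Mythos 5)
Your argument is correct, but note that the paper does not prove this statement at all --- it is imported verbatim from Cannon--Floyd--Parry, and what you have written is essentially the standard argument from that reference (generation via the conjugation relations \(x_{k+1}=x_0^{-1}x_kx_0\) in the infinite presentation, and the abelianisation computed by killing commutators in that presentation, with the endpoint-slope homomorphisms \(f\mapsto(\log_2 f'(0^+),\log_2 f'(1^-))\) confirming that \(\bar x_0,\bar x_1\) are independent). Everything checks out, including the slope values \((1,-1)\) and \((0,-1)\) for \(x_0\) and \(x_1\) and the Hopfian argument closing the loop, so there is nothing to add beyond observing that the only real content is the validity of the infinite presentation, which you rightly defer to the cited source.
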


    In light of \cref{F:abelianisation}, we can define exponent sums in \(F\).

    \begin{ntn}
        Let \(\varphi \colon F \to \mathbb{Z}^2\) be the abelianisation map. If \(f \in F\),
        then \((f)\varphi = (x_0 \varphi)^i (x_1 \varphi)^j\), for some unique \(i, j \in \mathbb{Z}\).
        Define \(\expsum_{x_0}(f) = i\) and \(\expsum_{x_1}(f) = j\).
    \end{ntn}

    We require a description of the centralisers of the elements of \(F\). We
    use the description given in Matucci's thesis \cite{MatucciThesis}, however
    this is not the only such description.
    The references \cite[Theorem 15.35]{GubaSapir97} and \cite[Theorem 5.5]{BrinSquier} for this fact also describe centralisers in Thompson's group
    \(F\), but require interpretation. 
\begin{proposition}[Proof of {\cite[Proposition 4.2.2]{MatucciThesis}{}}]\label{FCentralisers}
Suppose that \(f\in F\) and let \(0=d_0< d_1<\ldots< d_k=1\) be the unique dyadic rationals fixed by \(f\) such that for all \(i<k\) we have exactly one of:
\begin{itemize}
    \item The interval \((d_i, d_{i+1})\) is a maximal open interval which is fixed pointwise by \(f\);
    \item The interval \((d_{i}, d_{i+1})\) is a maximal open interval such that every dyadic rational within \((d_{i}, d_{i+1})\) lies within the support of \(f\).
\end{itemize}
In this case \(g\in F\) commutes with \(f\) if and only if both of the following hold:
\begin{enumerate}
    \item Each of \(d_0, d_1,\ldots, d_k\) is fixed by \(g\);
    \item For all \(i<k\), \(g|_{(d_i, d_{i+1})}\) commutes with \(f|_{(d_i, d_{i+1})}\).
\end{enumerate}
Moreover, if \(f|_{(d_i, d_{i+1})}\) is non-trivial then the group of all restrictions \(g|_{(d_i, d_{i+1})}\) where \(g\in F\) commutes with \(f\), is a cyclic group.
\end{proposition}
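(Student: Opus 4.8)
The plan is to prove the biconditional by two short implications and then to establish the ``moreover'' clause, which carries essentially all of the difficulty. Preliminarily, the decomposition $0 = d_0 < \dots < d_k = 1$ is well defined and finite: since $f$ is piecewise linear with finitely many breakpoints, $\Fix(f)$ is a finite union of points and non-degenerate closed intervals, each maximal open interval fixed pointwise by $f$ has dyadic endpoints (a breakpoint of $f$ occurs there), and adjoining the finitely many isolated dyadic fixed points cuts $[0,1]$ into finitely many open intervals, each of exactly one of the two listed types. We also use freely that every element of $F$ is an increasing homeomorphism of $[0,1]$ (its slopes are positive) fixing $0$ and $1$, and carrying dyadic rationals to dyadic rationals. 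Now suppose $fg = gf$. Conjugation by $g^{\pm 1}$ fixes $f$, so $g$ maps $\Fix(f)$ onto itself, hence $\supt(f)$ onto itself; being a homeomorphism that preserves the set of dyadic rationals, $g$ then carries maximal fixed open intervals, and maximal ``every dyadic in the support'' intervals, to intervals of the same respective kinds. Thus $g$ permutes the finite, linearly ordered family $\{(d_i, d_{i+1}) : 0 \le i < k\}$, and since $g$ is increasing this permutation is trivial; hence $g$ fixes every $d_i$, which is (1). Then $f$ and $g$ each map $(d_i, d_{i+1})$ onto itself, and restricting $fg = gf$ there gives (2). Conversely, if (1) and (2) hold, $fg$ and $gf$ agree at each $d_i$ (both send it to $d_i$, as $f$ fixes it) and on each $(d_i, d_{i+1})$ by (2), hence everywhere.

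For the moreover clause, the biconditional shows that restricting members of $C_F(f)$ to $(d_i, d_{i+1})$ and extending by the identity identifies the group of restrictions with $C_{F_i}(h)$, where $h := f|_{(d_i, d_{i+1})}$ and $F_i$ is the group of piecewise linear homeomorphisms of $[d_i, d_{i+1}]$ with finitely many dyadic breakpoints, slopes integer powers of $2$, fixing the endpoints. Assume $h \ne \id$; then $(d_i, d_{i+1})$ is of the second type, so every dyadic rational in it lies in $\supt(h)$, whence the fixed points of $h$ in $(d_i, d_{i+1})$ are finitely many and all non-dyadic, say $d_i = c_0 < c_1 < \dots < c_r = d_{i+1}$, and on each $(c_j, c_{j+1})$ the restriction $\beta_j := h|_{(c_j, c_{j+1})}$ is fixed-point-free, hence strictly increases or strictly decreases every point; replacing $\beta_j$ by $\beta_j^{-1}$ if necessary (which has the same centraliser), assume $(x)\beta_j > x$ throughout. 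The first key point is that the centraliser of $\beta_j$ in the group $G_j$ of piecewise linear homeomorphisms of $[c_j, c_{j+1}]$ of the same type is infinite cyclic: the exponent of the slope of a centralising element immediately to the right of $c_j$ defines a homomorphism $C_{G_j}(\beta_j) \to \mathbb{Z}$, and it is injective, because an element with slope $1$ at $c_j^+$ is the identity near $c_j$, and then a propagation argument using $(x)\beta_j > x$ together with the commutation relation forces it to be the identity on all of $[c_j, c_{j+1}]$; the same holds using the slope at $c_{j+1}^-$.

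To finish, an element $g \in C_{F_i}(h)$ fixes every $c_j$ (it permutes the finite ordered set $\{c_0, \dots, c_r\}$ order-preservingly, fixing the ends) and restricts on each $(c_j, c_{j+1})$ to an element of $C_{G_j}(\beta_j)$; conversely a tuple of such restrictions glues to a member of $F_i$ precisely when the slopes match at each interior $c_j$, since a non-dyadic point cannot be a breakpoint of an element of $F_i$. Writing $C_{G_j}(\beta_j) = \langle \gamma_j \rangle$ and letting $p_j, q_j$ (both nonzero, by the injectivity just proved) be the slope exponents of $\gamma_j$ at $c_j^+$ and $c_{j+1}^-$, the matching conditions read $a_{j-1} q_{j-1} = a_j p_j$ for $1 \le j \le r-1$, where $\gamma_j^{a_j}$ is the restriction of $g$ to $(c_j, c_{j+1})$. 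Thus $C_{F_i}(h)$ is isomorphic to the subgroup of $\mathbb{Z}^r$ cut out by these equations; any solution is an integer multiple of $(1, q_0/p_1, q_0 q_1/(p_1 p_2), \dots, \prod_{\ell=1}^{r-1} q_{\ell-1}/p_\ell)$, so projection to the first coordinate embeds this group into $\mathbb{Z}$, and it is therefore cyclic.

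The main obstacle is the moreover clause: proving that the centraliser of a single fixed-point-free bump is cyclic — via injectivity of the ``germ at an endpoint'' homomorphism, obtained by propagation — and then showing that gluing across the non-dyadic fixed points $c_j$ imposes exactly the slope-matching constraints that collapse the a priori product $\prod_j \mathbb{Z}$ to a cyclic group. Throughout, the essential bookkeeping point is to distinguish the dyadic cut points $d_i$, across which one glues freely, from the non-dyadic fixed points $c_j$, across which slopes must agree.
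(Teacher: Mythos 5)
The paper does not actually prove this proposition: it is imported wholesale from Matucci's thesis (the bracketed attribution ``Proof of Proposition 4.2.2'' in the statement), so there is no in-paper argument to compare yours against. Judged on its own, your proof is correct and is essentially the standard Brin--Squier/Matucci argument: reduce to the canonical dyadic cut points \(d_i\) (which any increasing, dyadic-preserving element of the centraliser must fix, since it permutes a finite ordered family of canonically determined intervals), then decompose a non-trivial block at its finitely many necessarily non-dyadic interior fixed points \(c_j\), show the centraliser of each fixed-point-free bump is infinite cyclic via the injective ``slope exponent at the endpoint'' homomorphism (injectivity by the conjugation/propagation argument pushing points into the neighbourhood where the element is the identity), and finally observe that gluing across the non-dyadic \(c_j\) forces slope-matching, which cuts the product of copies of \(\mathbb{Z}\) down to a subgroup on which projection to the first coordinate is injective. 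All the key points are present, including the two that are easiest to fumble: that \(p_j,q_j\neq 0\) (needed for the injectivity of that projection) and the distinction between the dyadic \(d_i\), across which one glues freely, and the non-dyadic \(c_j\), across which one cannot introduce a breakpoint. The only loose phrase is ``any solution is an integer multiple of \((1,q_0/p_1,\dots)\)'', which is a rational vector; but you immediately draw the conclusion from the injectivity of the first-coordinate projection instead, which is the correct and sufficient argument.
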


\subsection{Equations in groups}
    We give the basic definitions of equations in groups.
	\begin{dfn}
		Let \(G\) be a finitely generated group, \(V\) be a finite set, and \(F(V)\)
		be the free group on \(V\). An \emph{equation} in \(G\) is an element \(w
		\in G \ast F(V)\) and denoted \(w = 1\). A \emph{solution} to \(w = 1\) is
		a homomorphism \(\phi \colon G \ast F(V) \to G\) which fixes
		\(G\) pointwise, such that \((w) \phi = 1\). The elements of \(V\) are called the
		\emph{variables} of the equation. A \emph{(finite) system of equations} in \(G\)
		is a finite collection of equations in \(G\). A \emph{solution} to a system \(\mathcal E\) of equations
		is a homomorphism that is a solution to every equation in \(\mathcal E\).

        The \emph{Diophantine problem} in \(G\) is the decidability question asking if there is an algorithm that takes as input a system \(\mathcal E\) of equations in \(G\) and outputs whether or not \(\mathcal E\) admits a solution.
	\end{dfn}
    
		We will often abuse notation, and consider a solution to an equation with \(n \in \mathbb{Z}_{>0}\) variables \(X_1, \ldots, X_n\) in a
		group \(G\) to be an \(n\)-tuple of elements \((g_1, \ \ldots, \ g_n)\), rather
		than a homomorphism from \(G \ast F(X_1, \ \ldots, \ X_n) \to G\). A homomorphism
		\(\phi\) can be recovered from such an \(n\)-tuple by setting \(\phi\) to be the homomorphic extension defined using \((g) \phi = g\) if \(g \in G\) and \((X_i)
		\phi = g_i\).

    \begin{ex}
        Consider the equation \(XYX^{-1}Y^{-1}\) in a free group \(F(a, b)\). The set of solutions
        to this will be all pairs \((x, y) \in F(a, b) \times F(a, b)\) such that \(x\) and \(y\) commute. That
        is, all pairs \((x, y) \in F(a, b) \times F(a, b)\) such that there exists \(z \in F(a, b)\) with \(x, y \in \langle z \rangle\).
    \end{ex}

    \begin{dfn}
        Let \(G\) be a finitely generated group and \(n \in \mathbb{Z}_{>0}\). A subset \(S \subseteq G^n\) is called
        \emph{equationally definable} in \(G\) if there is a system
        \(\mathcal E\) of equations in \(G\) with \(m \geq n\)
        variables, such that there are \(n\) variables
        \(X_1, \ldots, X_n\) such that \(S\) is the projection of the set of solutions (when viewed as \(m\)-tuples)
        to \(\mathcal E\), onto the coordinates
        corresponding to \(X_1, \ldots, X_n\). That is, the set of solutions to \(X_1, \ldots, X_n\) within
        the system \(\mathcal E\).
    \end{dfn}

    \begin{ex}
        Let \(G\) be a group. The centraliser \(C_G(g)\) of any element
        \(g \in G\) is equationally definable as it is
        the set of solutions (to \(X\)) in the equation
        \(Xg = gX\).
    \end{ex}

    \begin{dfn}\label{TDef}
        \emph{Thompson's group \(T\)} is the group of piecewise linear homeomorphisms from
        \(S^1 = [0, 1] / (0 = 1)\) to itself that map dyadic rational numbers to dyadic rational
        numbers, differentiable everywhere except at finitely many dyadic rational
        numbers, and such that the derivative on at every differentiable point is always
        an integer power of \(2\).
    \end{dfn}

\subsection{Quadratic equations in the ring of integers}
Our source of undecidability will be Hilbert's tenth problem: the undecidability of the
satisfiability of systems of polynomial equations in non-negative integers. We cover the background and the exact form of Hilbert's tenth problem that we use. 

\begin{dfn}
 A \emph{polynomial equation} in (the ring) \(\Z\) is a pair \((p, q)\) of polynomials over \(\Z\). That is to say, there is a finite set \(V=\{X_1, X_2, \ldots, X_n\}\) (whose elements we call \emph{variables}) such that \(p, q\in \Z[X_1, X_2, \ldots, X_n]\). We usually write \(p=q\) instead of \((p, q)\). 
 A \emph{solution} is a ring homomorphism \(\phi \colon \Z[X_1, X_2, \ldots, X_n]\to \Z\) which fixes \(\Z\) and such that \((p)\phi=(q)\phi\).

A \emph{polynomial equation} in \(\N\) is simply an equation \(p=q\) in \(\Z\) such that all the coefficients of \(p\) and \(q\) are non-negative, and when discussing an equation in \(\N\) we only consider solutions which assign non-negative values to all variables.

A \emph{(finite) system of polynomial equations} in \(\Z\) or \(\N\) is a finite collection of polynomial equations in  \(\Z\) or \(\N\), respectively. 
A \emph{solution} to a system \(\mathcal E\) of polynomial equations in \(\Z\) or \(\N\)
		is a homomorphism that is a solution to every equation in \(\mathcal E\).
\end{dfn}

It is a well-known consequence of Lagrange's four-squares theorem that the decidability of
the satisfiability of systems of polynomial equations in the ring of integers is equivalent
to the decidability of the satisfiability via non-negative integer solutions to systems of polynomial equations in the ring of integers (see, for example, the introduction of \cite{Matijasevic72}, among many others). We thus state the undecidability of Hilbert's tenth problem in the following form:

\begin{theorem}[\cite{Matijasevic70}]
    It is undecidable whether a system of polynomial equations in \(\Z\) admits a solution of
    non-negative integers.
\end{theorem}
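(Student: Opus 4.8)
The statement is Matiyasevich's theorem, the capstone of the Davis--Putnam--Robinson programme on Hilbert's tenth problem; in the paper it is simply quoted from \cite{Matijasevic70}, so rather than a new argument the plan is to recall the skeleton of the known proof together with the elementary reductions that put it in exactly the stated form. The target is the \emph{MRDP theorem}: a subset \(S \subseteq \N^k\) is recursively enumerable if and only if it is \emph{Diophantine}, meaning there is \(p \in \Z[Y_1, \dots, Y_k, X_1, \dots, X_n]\) with \(\bar a \in S \iff (\exists \bar x \in \N^n)\, p(\bar a, \bar x) = 0\). Granting this, undecidability of the stated problem is immediate: fix a recursively enumerable non-recursive set \(S\) — say the halting set of a fixed universal machine — and its defining polynomial \(p\); an algorithm deciding whether \(p(\bar a, \bar X) = 0\) has a solution in non-negative integers would decide membership in \(S\). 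A single equation suffices, but one may equally use a system: the system \(q_1 = 0, \dots, q_m = 0\) over \(\Z\) is equivalent to the single equation \(q_1^2 + \dots + q_m^2 = 0\), and since only non-negative solutions are sought, moving negative monomials across the equals sign rewrites any such equation in the form \(p = q\) with \(p, q\) having non-negative coefficients, so all the variant notions of the preceding subsection coincide.

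The content is the hard direction, that every recursively enumerable set is Diophantine, and it is proved in two stages. First, Davis, Putnam and Robinson show that every recursively enumerable set is \emph{exponential Diophantine}, i.e.\ definable by an equation among polynomials together with the exponential \(y = z^w\): one codes the step relation of a register or Turing machine arithmetically and eliminates the bounded quantifiers in ``there is a halting computation'' using a G\"{o}del \(\beta\)-function whose graph is exponential Diophantine. Second — Matiyasevich's step — one shows that the graph \(\{(z, w, y) : y = z^w\}\) is itself genuinely Diophantine, exploiting the exponential growth of solutions to a Pell equation \(x^2 - (a^2 - 1)y^2 = 1\) (equivalently, of a Fibonacci-type recurrence) and assembling a polynomial definition of \(z^w\) out of congruence and divisibility conditions. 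Substituting the second result into the first removes the exponential and yields an ordinary Diophantine definition, completing the proof.

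The hard part is exactly this second stage: producing a polynomial relation whose solution sets grow exponentially and then pinning down \(z^w\) exactly from such relations — Matiyasevich's original obstacle. Everything else is routine bookkeeping: the machine-coding of stage one, the passage from systems to a single equation via sums of squares, the sign juggling that normalises coefficients, and — were one to want solutions over \(\Z\) rather than \(\N\) — Lagrange's four-square theorem to trade the two forms (replace each integer variable by a difference of two non-negative ones, or conversely adjoin the constraint that each variable be a sum of four squares). None of this is reproduced here, as the result is used purely as a black box.
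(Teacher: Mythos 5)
The paper gives no proof of this statement—it is quoted directly from Matiyasevich's 1970 paper as a black box—and you correctly identify it as such while giving an accurate outline of the standard MRDP argument (Davis--Putnam--Robinson reduction to exponential Diophantine sets, Matiyasevich's Pell-equation definition of exponentiation, and the routine sum-of-squares and four-squares reductions). Your account is correct and consistent with how the paper uses the result.
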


An equation \(p=q\) in \(\N\) or \(\Z\) is always equivalent (in the sense of having the same set of solutions) to the equation \(p-q=0\). Thus for our purposes it is sufficient to consider only equations of the form \(p=0\). 
Note that in the case of \(\N\) we might have that \(p-q\) no longer has coefficients in \(\N\). In fact, as every polynomial with integer coefficients is the difference of two polynomials with non-negative coefficients, it follows that solving equations in \(\N\) is equivalent to determining if an equation of the form \(p=0\) (where \(p\) might have some negative coefficients) has solution which assigns variables to non-negative values. From this
we have the following:

\begin{cor}
    It is undecidable whether a system of polynomial equations in \(\N\) admits a solution.
\end{cor}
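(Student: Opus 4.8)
The plan is to reduce the problem addressed by the preceding Theorem — deciding whether a system of polynomial equations over \(\Z\) has a solution in non-negative integers — directly to the satisfiability of systems of polynomial equations in \(\N\). Since the former is undecidable, the latter will be as well.

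First I would take an arbitrary system \(\mathcal E = \{f_1 = g_1, \ldots, f_m = g_m\}\) of polynomial equations in \(\Z\), with \(f_j, g_j \in \Z[X_1, \ldots, X_n]\). As noted in the discussion above, \(\mathcal E\) has exactly the same set of solutions as the system \(\{h_1 = 0, \ldots, h_m = 0\}\), where \(h_j = f_j - g_j \in \Z[X_1, \ldots, X_n]\), so it suffices to handle systems of this form while asking for solutions in non-negative integers.

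Next, for each \(j\) I would split \(h_j\) according to the signs of its coefficients: write \(h_j = p_j - q_j\), where \(p_j\) is the sum of those monomials of \(h_j\) whose coefficient is positive and \(q_j\) is the sum of the negatives of the monomials of \(h_j\) whose coefficient is negative. Then \(p_j, q_j\) have coefficients in \(\N\), this decomposition is plainly computable from \(h_j\), and for every assignment \(\phi\) of non-negative integer values to \(X_1, \ldots, X_n\) we have \((h_j)\phi = 0\) if and only if \((p_j)\phi = (q_j)\phi\). Hence \(\mathcal E' = \{p_1 = q_1, \ldots, p_m = q_m\}\) is a system of polynomial equations in \(\N\), computable from \(\mathcal E\), whose solutions are exactly the non-negative integer solutions of \(\mathcal E\).

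Finally I would observe that an algorithm deciding whether an arbitrary system of polynomial equations in \(\N\) admits a solution, composed with the computable map \(\mathcal E \mapsto \mathcal E'\), would decide whether an arbitrary system of polynomial equations in \(\Z\) admits a non-negative integer solution, contradicting the Theorem; so no such algorithm exists. There is no real obstacle here: the only points to verify are that the decomposition \(h_j = p_j - q_j\) is effective and preserves the solution set, and both are immediate.
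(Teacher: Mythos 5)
Your proposal is correct and is exactly the argument the paper intends: the reduction from non-negative solvability over \(\Z\) to solvability in \(\N\) via the sign-based splitting \(h_j = p_j - q_j\) is precisely the observation made in the paragraph preceding the corollary. Nothing is missing.
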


The final result we mention is also well-known and (usually using \(\Z\) rather than \(\N\)) is
frequently used to show Diophantine problems or similar are undecidable in various classes of
groups and monoids, such as \cite{BuchiSenger,Dong, DuchinLiangShapiro, CiobanuGarreta, Romankov, GarretaMiasnikovOvchinnikov21, GarretaMiasnikovOvchinnikov17}.

\begin{cor}
    \label{cor:dp-naturals-undecidable}
    It is undecidable whether a system of equations in \(\N\) of the form
    \(\gamma_1 + \gamma_2 = \gamma_3\) or \(\gamma_1 \gamma_2
    = \gamma_3\), where each \(\gamma_i\) can be a constant or a variable, admits a solution.
\end{cor}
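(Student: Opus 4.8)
The plan is to reduce from the preceding corollary on systems of polynomial equations in \(\N\): given such a finite system, I will produce algorithmically an equivalent system using only the two permitted atomic shapes \(\gamma_1+\gamma_2=\gamma_3\) and \(\gamma_1\gamma_2=\gamma_3\), where ``equivalent'' means that the projection of the solution set of the new system onto the original variables \(X_1,\ldots,X_n\) equals the solution set of the original system. Since the transformation will be effective, a decision procedure for satisfiability of systems of the restricted form would decide satisfiability of arbitrary systems of polynomial equations in \(\N\), which is impossible.

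First I would treat one input equation at a time; after splitting a mixed-sign equation \(p=0\) as \(p^{+}=p^{-}\) if desired, I may assume it has the form \(p=q\) with \(p,q\in\N[X_1,\ldots,X_n]\). Expand \(p=\sum_{k=1}^{s}\mu_k\) as a sum of monomials \(\mu_k=c_k X_1^{a_{k,1}}\cdots X_n^{a_{k,n}}\) with \(c_k\in\N\). For each monomial I introduce a straight-line program computing its value: starting from the \(X_i\), repeatedly apply equations \(Y_1Y_2=Y_3\) (with \(Y_1,Y_2\) among the \(X_i\) or previously introduced variables, and \(Y_3\) fresh) to build each power \(X_i^{a_{k,i}}\), then their product, and finally one multiplication by the constant \(c_k\), ending at a fresh variable \(N_k\) whose value is forced to equal \(\mu_k\). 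Then I chain equations \(Z_1+Z_2=Z_3\) to accumulate \(N_1+\cdots+N_s\) into a fresh variable \(P\), and similarly build a variable \(Q\) for \(q\). Each auxiliary variable is functionally determined by the values of \(X_1,\ldots,X_n\), so any assignment to the \(X_i\) extends uniquely to a solution of this block, with \(P\) and \(Q\) taking the values of \(p\) and \(q\) at that assignment.

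It remains to encode the constraint \(P=Q\) in the permitted form. I would add a fresh variable \(R\) together with \(P+R=Q\) and \(Q+R=P\); adding these forces \(2R=0\), hence \(R=0\) and \(P=Q\) over \(\N\), while conversely \(P=Q\) is witnessed by \(R=0\). (Since the statement also permits constants, the single equation \(P+0=Q\) works as well.) Carrying this out for every equation of the input and taking the union of all resulting blocks yields a finite system \(\mathcal E'\) of the required shape, computable from the input, whose solutions restrict exactly to the solutions of the original system, completing the reduction.

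The only points needing care are routine bookkeeping: verifying that every introduced variable is uniquely determined by \(X_1,\ldots,X_n\) (so no spurious solutions are created and no genuine ones lost), handling the degenerate cases in which \(p\) or \(q\) is a constant or the zero polynomial (let \(P\) or \(Q\) be that constant, or \(0\), and feed it directly into a sum or the final comparison), and noting that the whole construction is clearly effective. I do not anticipate any real obstacle.
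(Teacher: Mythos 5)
Your proposal is correct and follows essentially the same route as the paper: reduce from the satisfiability of systems of polynomial equations in \(\N\) by introducing fresh variables for the intermediate products and sums in the monomial expansion of each side, which the paper states only in outline ("add equations insisting that these are built up in the required way"). Your version simply spells out the bookkeeping (straight-line programs for monomials, chained additions, and the final comparison \(P+0=Q\)) that the paper leaves implicit.
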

\begin{proof}
    We show this by showing that for any system \(\mathcal{E}\) of polynomial equations, one can (algorithmically) find a system of equations of the type in this corollary with the same set of solutions.

    For each equation \(p=q\in \mathcal{E}\), we have by definition of a polynomial that there are \(k, l\in \N\) such that
    \[p=\sum_{i<k}\prod_{j<l} p_{i, j},\quad \quad q=\sum_{i<k}\prod_{j<l} q_{i, j},\]
    where \(p_{i, j}\) and \(q_{i, j}\) are either variables or constants in \(\N\). 
    Thus we can represent this by thinking of \(p\) and \(q\) as variable symbols and add equations insisting that these are built up in the required way. 
\end{proof}

\section{Proof of main theorem}

We begin the proof of \cref{main-thm}. Our first task will be to show that various
subgroups and submonoids of Thompson's group \(F\) are equationally definable. To avoid any confusion between subgroups and submonoids, we introduce the following notation:
\begin{ntn}
    If \(G\) is a group and \(g \in G\), we use \(\langle g \rangle\) to denote the subgroup
    generated by \(g\) and \(\Mon\langle g \rangle\) to denote the submonoid generated by \(g\).
\end{ntn}

\begin{lem}\label{monogenicmonoiddefinable}
  The following sets are equationally definable in Thompson's
  group \(F\):
  \begin{enumerate}
      \item \(\langle x_0 \rangle\);
      \item \(\langle x_1 \rangle\);
      \item \(\Mon\langle x_0 \rangle\);
      \item \(\Mon\langle x_1 \rangle\);
      \item \(\Mon\langle x_0 x_1 \rangle\);
      \item \(\{(r, s) \mid r \in \Mon\langle x_1 \rangle, s \in \Mon\langle x_0r \rangle\}\).
  \end{enumerate}
\end{lem}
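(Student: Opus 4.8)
The plan is to assemble all six sets from a small number of reusable pieces, using that the class of equationally definable subsets of powers of \(F\) is closed under finite intersections (take the union of the defining systems on disjoint auxiliary variables) and under preimages by word maps with constants (adjoin equations \(Y = w(\mathbf X)\)). Everything rests on two facts: \cref{FCentralisers}, which computes centralisers and in particular tells us when a centraliser is cyclic; and the auxiliary claim that for every dyadic \(d \in (0,1)\) the pointwise stabiliser \(S_d := \{\, g \in F : (p)g = p \text{ for all } p \in [0,d]\,\}\) is equationally definable. Rewriting everything as honest systems of equations is routine; the two places something substantive happens are detecting \emph{positivity} of a power equationally and proving that \(S_d\) is equationally definable.

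\emph{Cyclic groups.} The element \(x_0\) has support \((0,1)\) and is strictly above the diagonal there, so by \cref{FCentralisers} its centraliser is cyclic; since the germ of \(x_0\) at \(0\) is multiplication by \(2\), it has no proper roots in \(F\), hence \(C_F(x_0) = \langle x_0\rangle\) and (1) is defined by \(Xx_0 = x_0X\). The same argument applies verbatim to \(x_0 x_1^m\) for every \(m \geq 0\): such an element is strictly above the diagonal on all of \((0,1)\), with full support, and its germ at \(0\) is again multiplication by \(2\), so \(C_F(x_0 x_1^m) = \langle x_0 x_1^m\rangle\). This gives the "ambient" group in (5) and also shows that for a variable \(R\) the equation \(S(x_0 R) = (x_0 R)S\) forces \(S \in \langle x_0 R\rangle\) whenever \(R \in \Mon\langle x_1\rangle\). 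For (2): applying \cref{FCentralisers} to \(x_1\) (support \((1/2,1)\)) shows that \(C_F(x_1)\) consists exactly of the products \(g_L g_R\) with \(g_L\) supported in \([0,1/2]\) arbitrary and \(g_R \in \langle x_1\rangle\); here one uses that the germ of \(x_1\) at \(1\) is multiplication by \(2^{-1}\), so \(x_1\) has no proper roots and the cyclic group of restrictions to \((1/2,1)\) is exactly \(\langle x_1|_{(1/2,1)}\rangle\). Consequently \(\langle x_1\rangle = C_F(x_1) \cap S_{1/2}\), which reduces (2) to the definability of \(S_d\).

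\emph{Definability of \(S_d\).} Pick \(z_d \in F\) a rescaled copy of \(x_0\) supported on \([0,d]\) (so \(\supt(z_d) = (0,d)\), \(z_d\) has no proper roots, and every nontrivial power of \(z_d\) moves every point of \((0,d)\)) and \(z_d' \in F\) a rescaled copy of \(x_0\) supported on \([0,d']\) for some dyadic \(d' \in (0,d)\). By \cref{FCentralisers}, membership of \(g\) in \(C_F(z_d)\) forces \(g\) to fix \(d\) and \(g|_{(0,d)}\) to be a power of \(z_d|_{(0,d)}\); additionally requiring \(g \in C_F(z_d')\) forces \(g\) to fix \(d'\), and since no nontrivial power of \(z_d\) fixes \(d'\) this forces \(g|_{(0,d)} = \id\). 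The reverse inclusion is immediate from disjointness of supports, so \(S_d = C_F(z_d) \cap C_F(z_d')\). A single centraliser cannot equal \(S_d\), so the nested pair \(z_d, z_d'\) is genuinely needed; I expect this to be the trickiest piece.

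\emph{The monoids.} The key device is a positivity detector: if \(a \in F\) is strictly above the diagonal with support \((0,1)\), then for \(g = a^k \in \langle a\rangle\) we have \(\supt(g^{-1} x_1 g) = (1/2,1)\cdot g = \bigl((1/2)a^k,\,1\bigr)\), which lies inside \([1/2,1]\) exactly when \(k \geq 0\); hence \(\Mon\langle a\rangle = \{\, g \in \langle a\rangle : g^{-1}x_1 g \in S_{1/2}\,\}\), which is equationally definable by the previous steps. Taking \(a = x_0\) gives (3), \(a = x_0 x_1\) gives (5), and taking \(a = x_0 R\) with \(R\) running over \(\Mon\langle x_1\rangle\) (using (4)) supplies the \(s\)-clause of (6). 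For (4) I would instead use that \(x_1\) is strictly above the diagonal on \((1/2,1)\) and detect positivity of \(x_1^k\) with the conjugate \(x_0^{-1}x_1 x_0\) (support \((3/4,1)\)): \(\supt\bigl(g^{-1}(x_0^{-1}x_1x_0)g\bigr) = (3/4,1)\cdot g\) lies inside \([3/4,1]\) precisely for \(g = x_1^k\) with \(k \geq 0\), giving \(\Mon\langle x_1\rangle = \{\, g \in \langle x_1\rangle : g^{-1}(x_0^{-1}x_1x_0)g \in S_{3/4}\,\}\). Finally (6) is assembled as the set of \((r,s)\) satisfying \(r \in \Mon\langle x_1\rangle\), \(s(x_0 r) = (x_0 r)s\), and \(s^{-1}x_1 s \in S_{1/2}\). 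A secondary wrinkle worth flagging is that, unlike \(C_F(x_0)\), the centraliser \(C_F(x_1)\) is not cyclic — it carries an extra factor supported in \([0,1/2]\) — so (2), like (3)--(6), must go through the \(S_d\) machinery.
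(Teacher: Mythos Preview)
Your argument is correct and follows essentially the same strategy as the paper: cyclic subgroups come from centralisers (using \cref{FCentralisers} plus a germ argument to rule out proper roots), and non-negativity of a power is detected by checking that the conjugate of \(x_1\) has support contained in the right half-interval, which is expressed by commutation with element(s) supported in the complementary left interval. The packaging differs slightly: you isolate the stabilisers \(S_d\) as standalone equationally definable sets via a pair of nested centralisers \(C_F(z_d)\cap C_F(z_{d'})\), whereas the paper works inline with a single commutator condition \([h^{-1}x_1h,\,l]=1\) for one element \(l\) with \(\supt(l)=(0,\tfrac12)\) (this suffices because \(h^{-1}x_1h\) always has support of the form \((c,1)\), so commuting with \(l\) already forces \(c\ge \tfrac12\) --- your second element \(z_{d'}\) is redundant in that specific application, though it is genuinely needed to pin down \(S_d\) itself). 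For (2) the paper's choice \(C_F(x_1)\cap C_F(x_0')\cap C_F(x_1')\), with \(x_0',x_1'\) generating the copy of \(F\) on \((0,\tfrac12)\), is exactly your \(C_F(x_1)\cap S_{1/2}\) with a different pair of witnesses; and for (4) the paper instead transports the positivity detector for \(x_0\) through the isomorphism to the copy of \(F\) supported on \((\tfrac12,1)\), while your direct argument via \(x_0^{-1}x_1x_0\) and \(S_{3/4}\) is an equally valid alternative.
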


\begin{proof}
 It  immediately follows from \cref{FCentralisers} that \(C_F(x_0)\) is a cyclic subgroup of
\(F\) containing \(x_0\), and is thus \(\langle x_0 \rangle\) (because the derivative of \(x_0\) at \(0\) is \(2^1\) so \(x_0\) is not a proper power of any element of \(F\)). 
It follows that
\(\langle x_0 \rangle\) is equationally definable, as the set of solutions to \(Xx_0  =x_0 X\),
and we have shown (1).

To show (2), let \(x_0', x_1'\) generate the group of elements of \(F\) supported on \((0,\frac{1}{2})\) (which is isomorphic to \(F\) by \cite[Lemma 4.4]{CannonFloydParry}).
Since \(F\) has trivial centre (corollary to \cref{FCentralisers} or \cite[Theorem 4.3 and Theorem 4.5]{CannonFloydParry}), the group of elements of \(F\) which commute with \(x_0'\) and \(x_1'\) is just the set of elements
of \(F\) whose support does not intersect \((0, \frac{1}{2})\). 
Thus by \cref{FCentralisers}, the elements commuting with \(x_1\) and \(\langle x_0', x_1' \rangle\) are precisely
those whose support lies in \((\frac{1}{2}, 1)\) and commute with \(x_1\). By \cref{FCentralisers}, this is a cyclic group containing \(x_1\), and is
thus \(\langle x_1 \rangle\) (as the derivative of \(x_1\) at \(1\) is \(2^{-1}\) and is this not a proper power of any element of \(F\)). We can conclude that \(\langle x_1 \rangle\)
is equationally definable, and we have thus shown (2).

By the chain rule, for all \(k\in \N\) the element \(x_0 x_1^k\) has derivative \(2\cdot 1^k=2\) at \(0\). In addition, \(x_0x_1^k\) has support \((0, 1)\) because for all \(t \in (0, 1)\) we have \((t)x_0 > t\) and \((t)x_1 \geq t\) and hence \((t)x_0 x_1^k > t\). \cref{FCentralisers} thus tells us that the centraliser of \(x_0 x_1^k\) is cyclic. As \(x_0 x_1^k\) has derivative \(2^1\) at \(0\), it is not a proper power, and must therefore generate its centraliser. 

Let \(l\in F\) be any element with support \((0, \frac{1}{2})\) (it follows from the definition of \(F\) that such an element exists). If \(h = (x_0 x_1^k)^n\) for some \(k\in \N\), \(n\in \Z\), then \
\begin{align*}
    n\geq 0 &\iff \left(\frac{1}{2}\right) h \geq \frac{1}{2}\\
    & \iff \left(\frac{1}{2} \right) h \not\in \supt(l)\\
    & \iff \left(\frac{1}{2} , 1\right) h \cap \supt(l) = \varnothing \\
    & \iff \supt(h^{-1}x_1 h)\cap \supt(l) = \varnothing \\
    &\iff[h^{-1} x_1 h, l]=1.
\end{align*}
Thus for all \(k\in \N \), we have 
    \begin{equation}
    \label{abk-definable}   
    \Mon\langle x_0 x_1^k \rangle=\makeset{h\in F}{\(h x_0 x_1^k = x_0 x_1^k h\) and \([h^{-1} x_1 h, l]=1\)},
    \end{equation}
    and thus \(\Mon \langle x_0 x_1^k \rangle\) is equationally definable. In
    particular, we have shown (5). The case of \(k = 0\)
    gives that \(\Mon\langle x_0 \rangle\) is equationally definable, and thus we have shown (3).

To show that \(\Mon \langle x_1 \rangle\) is equationally definable, as with (2), let \(x_0', x_1'\) generate the group of elements of \(F\) supported on \((0,\frac{1}{2})\) (which is isomorphic to \(F\) by \cite[Lemma 4.4]{CannonFloydParry}).
The group of elements of \(F\) which commute with \(x_0'\) and \(x_1'\) is canonically isomorphic to \(F\) via an isomorphism \(\phi\) taking \(x_0\) to \(x_1\). 
    Then using \eqref{abk-definable} with \(k = 0\),
     \[\Mon \langle x_1 \rangle=\makeset{h\in F}{\(hx_1 = x_1 h\) and \(h x_1' = x_1'h\) and \(h x_0' = x_0'x\) and \([h^{-1}(x_1)\phi h, (l)\phi]=1\)},\]
     and so \(\Mon\langle x_1 \rangle\) is equationally definable, and we have shown (4).
    
    To show (6), if \(\mathcal E\) is a system of equations with
    a variable \(X\), such that the set of solutions to \(X\) is \(\Mon\langle x_1 \rangle\), then 
    using \eqref{abk-definable}, the set of solutions to \((X, Y)\) in \(\mathcal E \cup \{Y (x_0 X) = (x_0 X)Y, [Y^{-1}x_1Y, l] = 1\}\) will be  \(\{(r, s) \mid r \in \Mon\langle x_1 \rangle, s \in \Mon\langle x_0r \rangle\}\), and so we have shown (6).
\end{proof}

    Our simulation of abelianisation constraints in \(F\) is captured by the following result. This is an immediate corollary of the fact that \(F\) has
    finite commutator width \cite{commwidth}.

    \begin{lem}
        \label{lem:comm-defable}
        The commutator subgroup \([F, F]\) of Thompson's group \(F\) is equationally
        definable in \(F\).
    \end{lem}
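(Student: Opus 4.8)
The plan is to exploit the fact, due to \cite{commwidth}, that every element of \([F,F]\) is a product of at most two commutators, so that membership in the commutator subgroup can be expressed by an existential sentence, which is exactly what equational definability requires. Concretely, the key identity is that for \(g \in F\),
\[
g \in [F,F] \quad\Longleftrightarrow\quad \exists a_1, b_1, a_2, b_2 \in F \text{ such that } g = [a_1,b_1][a_2,b_2].
\]
The forward direction is precisely the commutator-width-\(\leq 2\) statement; the reverse direction is immediate since any product of commutators lies in \([F,F]\).

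First I would introduce four variables \(A_1, B_1, A_2, B_2\) together with the single equation
\[
X = A_1^{-1} B_1^{-1} A_1 B_1 A_2^{-1} B_2^{-1} A_2 B_2,
\]
or equivalently \(X \bigl(A_1^{-1} B_1^{-1} A_1 B_1 A_2^{-1} B_2^{-1} A_2 B_2\bigr)^{-1} = 1\) to put it in the form "\(w = 1\)" of the definition. Then I would observe that the set of solutions to this system, projected onto the coordinate corresponding to \(X\), is exactly the set of products of two commutators in \(F\). By the commutator-width result of \cite{commwidth}, this set is all of \([F,F]\), and conversely every such product lies in \([F,F]\). Hence \([F,F]\) is equationally definable in \(F\) with the distinguished variable \(X\) (and three auxiliary variables, as permitted by the definition of equational definability).

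I do not anticipate a genuine obstacle here: the entire content is the cited bound on commutator width, and once that is invoked the argument is a one-line unwinding of the definitions. The only point requiring any care is bookkeeping — making sure the equation is genuinely of the allowed syntactic form (an element of \(F \ast F(V)\) set equal to \(1\)) and that the notion of equational definability in the paper indeed permits auxiliary variables beyond the one being defined, which the definition given in the preliminaries does.
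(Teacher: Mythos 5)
Your proposal is correct and is essentially identical to the paper's proof: both invoke the commutator-width-two result of \cite{commwidth} to write \([F,F]\) as the set of products of two commutators, and then observe that this is the projection onto \(X\) of the solution set of the single equation \(X = [Y_1,Y_2][Y_3,Y_4]\).
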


    \begin{proof}
        By \cite[Theorem 1.1]{commwidth}, \([F, F] = \{[g_1, g_2][g_3, g_4] \mid
        g_1, g_2, g_3, g_4 \in F\}\), which is just the set of solutions to \(X\)
        in the equation \(X = [Y_1, Y_2][Y_3, Y_4]\).
    \end{proof}
    
    The exact proof of the undecidability of the Diophantine problem with
    abelianisation constraints in free groups in \cite{CiobanuGarreta} first
    shows the undecidability of the Diophantine problem with exponent-sum
    constraints. We capture these constraints in the following lemma:

    \begin{lem}
        \label{lem:expsums-defable}
        The sets
        \begin{enumerate}
            \item \(\{g \in F \mid \expsum_{x_0}(g) = 0\}\);
            \item \(\{g \in F \mid \expsum_{x_1}(g) = 0\}\);
            \item \(\{(g,h) \in F\times F \mid \expsum_{x_0}(g) = \expsum_{x_0}(h)\}\);
            \item \(\{(g,h) \in F\times F \mid \expsum_{x_1}(g) = \expsum_{x_1}(h)\}\);
            \item \(\{(g,h) \in F\times F \mid \expsum_{x_0}(g) = \expsum_{x_1}(h)
            \geq 0\}\).
        \end{enumerate}
        are all equationally definable in \(F\).
    \end{lem}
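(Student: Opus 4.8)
The plan is to bootstrap everything from the equational definability of the commutator subgroup $[F,F]$ established in \cref{lem:comm-defable}, together with the monogenic (sub)monoids from \cref{monogenicmonoiddefinable}. The key observation is that $\expsum_{x_0}(g) = 0$ means exactly that $g x_1^{-m} \in [F,F]$ for suitable $m$, i.e. that $g$ differs from an element of $\langle x_1\rangle$ by something in $[F,F]$; more cleanly, since $\varphi$ sends $x_0, x_1$ to a basis of $\mathbb{Z}^2$, we have $\expsum_{x_0}(g)=0$ iff $g \in \langle x_1\rangle [F,F]$, and the latter is the set of solutions to $X$ in the system $\{X = YZ,\ Z = [W_1,W_2][W_3,W_4]\}$ where $Y$ ranges over the (equationally definable) subgroup $\langle x_1 \rangle$. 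This proves (1), and (2) is symmetric using $\langle x_0\rangle$ in place of $\langle x_1\rangle$. For (3), note $\expsum_{x_0}(g) = \expsum_{x_0}(h)$ iff $gh^{-1}$ has $x_0$-exponent-sum zero, so (3) reduces to (1) applied to the product $GH^{-1}$; similarly (4) reduces to (2).

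The interesting part is (5), where we must force $\expsum_{x_0}(g) = \expsum_{x_1}(h)$ \emph{and} this common value to be $\geq 0$. The nonnegativity is what the submonoids are for. First I would use parts (3) and (5) of \cref{monogenicmonoiddefinable}: introduce a variable $U$ constrained to lie in $\Mon\langle x_0 x_1\rangle$, so $U = (x_0 x_1)^n$ for some $n \geq 0$, whence $\expsum_{x_0}(U) = \expsum_{x_1}(U) = n \geq 0$. Then I would impose, via (3) above, that $\expsum_{x_0}(g) = \expsum_{x_0}(U)$, and via (4) above that $\expsum_{x_1}(h) = \expsum_{x_1}(U)$. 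This forces $\expsum_{x_0}(g) = n = \expsum_{x_1}(h)$ with $n \geq 0$, which is precisely the condition in (5). Conversely, any pair $(g,h)$ satisfying (5) is realized by taking $U = (x_0x_1)^n$ with $n$ the common value. Collecting all the systems (and their auxiliary variables) and projecting onto the coordinates for $g$ and $h$ gives the desired equational definition.

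The main obstacle I anticipate is purely bookkeeping: making sure that when we combine several systems of equations (each already using several auxiliary variables from \cref{monogenicmonoiddefinable,lem:comm-defable}) the variable sets are kept disjoint and the projection is onto the correct coordinates. There is one genuine subtlety worth double-checking: in the reduction "$\expsum_{x_0}(g) = \expsum_{x_0}(h)$ iff $gh^{-1} \in \langle x_1\rangle[F,F]$" one uses that $\varphi(gh^{-1}) = \varphi(g)\varphi(h)^{-1}$ lies in the subgroup generated by $\varphi(x_1)$, which is immediate from \cref{F:abelianisation} since $\varphi(x_0),\varphi(x_1)$ form a $\mathbb{Z}$-basis; so the set $\langle x_1 \rangle [F,F]$ is exactly $\varphi^{-1}(\langle \varphi(x_1)\rangle)$, and equational definability of this product set follows by writing $X = YZ$ with $Y$ in the (definable) group $\langle x_1\rangle$ and $Z$ in the (definable) group $[F,F]$. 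No step requires anything beyond the results already in the excerpt.
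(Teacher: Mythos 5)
Your proposal is correct and follows essentially the same route as the paper: identify the exponent-sum-zero sets as the products $\langle x_1\rangle[F,F]$ and $\langle x_0\rangle[F,F]$ (definable as products of definable sets), reduce (3) and (4) to (1) and (2) via $gh^{-1}$, and handle (5) by introducing an auxiliary element of $\Mon\langle x_0x_1\rangle$ whose two exponent sums coincide and are non-negative. No gaps.
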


    \begin{proof}
        We show \(\{g \in F \mid \expsum_{x_0}(g) = 0\} = [F, F] \langle x_1 \rangle\)
        and \(\{g \in F \mid \expsum_{x_1}(g) = 0\} = [F, F] \langle x_0 \rangle\). (1) and (2) will then follow from the fact that products of equationally definable
        sets are equationally definable, together with the facts that
        \([F, F]\), \(\langle x_0 \rangle\) and \(\langle x_1 \rangle\) are
        all equationally definable (\cref{monogenicmonoiddefinable} and \cref{lem:comm-defable}). Clearly,
        \(\{g \in F \mid \expsum_{x_0}(g) = 0\} \supseteq [F, F] \langle x_1 \rangle\). So
        let \(g \in F\) be such that \(\expsum_{x_0}(g) = 0\). Then \(\expsum_{x_0}(g x_1^{-\expsum_{x_1}(g)}) = 0 = \expsum_{x_1}(g x_1^{-\expsum_{x_1}(g)})\), and so \(g x_1^{-\expsum_{x_1}(g)}
        \in [F, F]\). Thus \(g \in [F, F] \langle x_1 \rangle\). The fact that 
        \(\{g \in F \mid \expsum_{x_1}(g) = 0\} = [F, F] \langle x_0 \rangle\) follows 
        similarly.

        We now show (3) and (4) using the following observations:
\[\{(g,h) \in F \times F \mid \expsum_{x_0}(g) = \expsum_{x_0}(h)\}=\{(g,h) \in F \times F \mid \expsum_{x_0}(gh^{-1}) = 0\},\]
\[\{(g,h) \in F \times F \mid \expsum_{x_1}(g) = \expsum_{x_1}(h)\}=\{(g,h) \in F \times F \mid \expsum_{x_1}(gh^{-1}) = 0\}.\]
Finally, to show (5), the set
\(\{(g,h) \in F \times F \mid \expsum_{x_0}(g) = \expsum_{x_1}(h)\geq 0\}\) is equal to
\[\{(g,h) \in F \times F \mid \exists x\in \Mon\langle x_0 x_1\rangle, \expsum_{x_0}(g) = \expsum_{x_0}(x) \textrm{ and } \expsum_{x_1}(h) = \expsum_{x_1}(x)\}.\]
\cref{monogenicmonoiddefinable} (3) and (4) thus now show that this set is equationally definable.
\end{proof}

    We have now completed all of the setup required to apply certain ideas from \cite[Theorem 3.3]{CiobanuGarreta} to show that the Diophantine problem in
    \(F\) is undecidable.

    \begin{theorem_no_number}[\ref{main-thm}]
        The Diophantine problem in Thompson's group \(F\) is undecidable.
    \end{theorem_no_number}

    \begin{proof}
        We will embed the natural numbers into \(F\) as \(\Mon \langle x_0 \rangle\). 
        First note \(x_0^k x_0^l = x_0^{k + l}\),
        so the system of equations \(XY = Z\), together with the equations that insist that \(X, Y, Z \in \Mon\langle x_0 \rangle\) (which exist by \cref{monogenicmonoiddefinable}), allow us to say that if \(X =x_0^k\) and \(Y=  x_0^l\) then \(Z = x_0^{k + l}\). We have thus shown that \(\{(x_0^k, x_0^l, x_0^{k + l} )\mid k, l \in \N\}\) is equationally definable in \(F\).
        
        We now proceed with multiplication; that is, we show that the set \(\{(x_0^k, x_0^l, x_0^{kl} )\mid k, l \in \N\}\) is equationally
        definable in \(F\). Let \(S \subseteq F \times F\) be the set defined in
        \cref{monogenicmonoiddefinable}(5). 
        Let \(k, l, m \in \N\). We claim that \(kl = m\) if and only if there exist \(r, s \in F\)
        satisfying:
        \begin{align}
            \label{eq:otimes}
            (r, s) \in S \wedge \expsum_{x_0}(x_0^k) = \expsum_{x_1}(r) \wedge
            \expsum_{x_0}(x_0^l) = \expsum_{x_0}(s) \wedge \expsum_{x_0}(x_0^m) = \expsum_{x_1}(s).
        \end{align}
        Suppose there exist \(r, s \in F\) satisfying \eqref{eq:otimes}.
        By the definition of \(S\), \(r = x_1^p\) for some \(p \in \N\) and \(s = (x_0 r)^q\), for some \(q \in \N\),
        and so \(s = (x_0 x_1^p)^q\). Since \(\expsum_{x_0}(x_0^k) = \expsum_{x_1}(r)\) we have
        \(p = k\) and since \(\expsum_{x_0}(x_0^l) = \expsum_{x_0}(s)\), we have \(q = l\). Finally,
        \(\expsum_{x_0}(x_0^m) =\expsum_{x_1}(s)\) tells us that \(m = pq = kl\), and we have
        shown the backwards direction of the claim.

        For the converse, suppose \(m = kl\). Set \(r = x_1^k\) and \(s = (x_0 x_1^k)^l = (x_0 r)^l\).
        Then \((r, s) \in S\). Additionally, \(\expsum_{x_0}(x_0^k) = k = \expsum_{x_1}(r)\), \(\expsum_{x_0}(x_0^l) = l = \expsum_{x_0}(s)\) and
        \(\expsum_{x_0}(x_0^m) = m = kl = \expsum_{x_1}(s)\). Thus \(r\) and \(s\) satisfy the conditions in \eqref{eq:otimes}, as
        required.

        Since the set \(S\) is equationally definable and the
        exponent-sum sets stated in \cref{lem:expsums-defable} are
        equationally definable, it thus
        follows that \(\{(x_0^k, x_0^l, x_0^{kl}) \mid k, l \in \N\}\)
        is equationally definable in
        \(F\). Taking this together with the fact that \(\{(x_0^k, x_0^l, x_0^{k + l}) \mid k, l \in \N\}\) is equationally definable, we can define the solution set to any system of
        polynomial equations over the natural numbers. Since
        the satisfiability of such a system is undecidable \cite{Matijasevic72}, it follows that the Diophantine
        problem of \(F\) is undecidable.
    \end{proof}

We now consider Thompson's group \(T\). To show that \(T\) has an undecidable Diophantine
problem, it suffices to show that a subgroup that is isomorphic to \(F\) is equationally definable in \(T\). In such a case, an algorithm solving the Diophantine problem for \(T\)
could be used to solve the Diophantine problem of \(F\). The fact that \(F\) can be
realised as a centraliser (and thus is equationally definable) in \(T\) was pointed out
to us by James Belk and Corentin Bodart independently. We include the short proof below. General centralisers of \(T\) have been described in \cite{MatucciThesis, RobertsonThesis, GeogheganVarisco}. 

 \begin{cor_no_number}[\ref{main-cor}]
       The Diophantine problem in Thompson's group \(T\) is undecidable.
    \end{cor_no_number}
\begin{proof}
By definition, \(T\) acts on a quotient of \([0, 1]\). When meaning is unambiguous, we write as if \(T\) acts on \([0, 1]\) itself.
    Note that the set \(\makeset{f\in T}{\((0)f=0\)}\) is precisely \(F\) (if we make no distinction between \([0, 1]\) and \([0, 1] / (0 = 1)\)).

    Thus  \(\makeset{f\in T}{\(f\) fixes \([0, \frac{1}{2}]\) pointwise}\) is isomorphic to \(F\), by \cite[Lemma 4.4]{CannonFloydParry}. In particular, if \(x_0'\) and \(x_1'\) are the elements from \cref{monogenicmonoiddefinable}, then the centraliser of \(x_0'\) and \(x_1'\) in \(F\) is the copy \(\makeset{f\in T}{\(f\) fixes \([0, \frac{1}{2}]\) pointwise}\) of \(F\). Every element of \(T\) which commutes with \(x_0'\) must preserve the set \([0, \frac{1}{2}]\) setwise and thus must fix \(0\) and \(\frac{1}{2}\) (any element of \(T\) switching \(0\) and \(\frac{1}{2}\) fails to preserve the set \([0, \frac{1}{2}]\)). It thus follows that the centraliser of \(x_0'\) and
    \(x_1'\) in \(T\) is equal to the centraliser in \(F\) of the same elements, which is
    isomorphic to \(F\).
    As the centraliser of a finite set, it is equationally definable in \(T\), and so
    we have shown that an isomorphic copy of \(F\) is equationally definable in \(T\), and thus \cref{main-thm} tells
    us that \(T\) has an undecidable Diophantine problem.
\end{proof}
\section{Further questions}
Given the various groups that exhibit similar behaviour to Thompson's group \(F\) and \(T\), it is natural to ask whether the Diophantine problem is decidable for them, and whether similar methods can be used.
Perhaps the first example to consider is Thompson's group \(V\). As discussed in the introduction, we do not expect that the arguments used here can be adapted for \(V\) directly, since its abelianisation is trivial. It is true that \(F\), \(T\) and many other groups with undecidable Diophantine problem, such as \(\Z \wr \Z\) \cite{Dong}, embed into \(V\), and so it would be sufficient to show that one of these is equationally definable in \(V\) to show
that the Diophantine problem is undecidable.

\begin{question}
    Is the Diophantine problem decidable in Thompson's group \(V\)?
\end{question}

Other natural directions to consider are towards generalisations of \(F\), such as to diagram groups. Since \(\Z\) is a diagram
group \cite[Example 6.3]{GubaSapir97}, some diagram groups can have decidable Diophantine problem, so one would instead ask which diagram groups have decidable Diophantine problem. Given the description of centralisers in diagram groups \cite{GubaSapir97}, there may be some hope of proving that other diagram groups have undecidable Diophantine problem. To apply a similar argument to that used for Thompson's group \(F\), one would need these diagram groups to have finite commutator width (or at least some method of showing that the commutator subgroup is equationally definable) and abelianisations of rank at least \(2\). 

\begin{question}
    Which diagram groups have decidable Diophantine problem?
\end{question}

Another direction one can consider, is to remain in \(F\) and ask where the boundary of decidability and undecidability lies. Given \(F\) has a decidable conjugacy problem \cite{GubaSapir97} and undecidable Diophantine problem, one natural question that lies between these problems is the question as to whether the satisfiability of single equations, rather than systems, is decidable. We call this decidability question the \emph{single equation problem}. The decidability of the single
equation problem can differ from that of the Diophantine problem, with the Heisenberg group being an
example \cite{DuchinLiangShapiro}. 

\begin{question}
    Is the single equation problem decidable in Thompson's group \(F\)?
\end{question}

One can restrict further still, and look at \emph{quadratic equations}:
equations where each variable occurs exactly twice. This area has seen a lot of
interest recently with a wide variety of positive decidability results,
including the Grigorchuk group \cite{LysenokMiasnikovUshakov}, solvable
Baumslag-Solitar groups \cite{LysenokUshakov}, 
and Lamplighter groups \cite{KharlampovichLopezMyasnikov, UshakovWeiers}, amongst many others.

\begin{question}
    Is it decidable whether a quadratic equation in \(F\) admits a solution?
\end{question}

\section*{Acknowledgements}
The authors would like to thank James Belk and Corentin Bodart for pointing out that \(F\) is
isomorphic to a centraliser in \(T\), and Collin Bleak, Corentin Bodart and Matthew Brin for finding and explaining references. The authors would like to thank the anonymous referee
for their helpful comments.
Elliott was supported by the Heilbronn Institute for Mathematical Research. Levine was
supported by the EPSRC Fellowship grant EP/V032003/1 ‘Algorithmic, topological and geometric aspects of infinite groups, monoids and inverse semigroups’.

\bibliography{references}
\bibliographystyle{alpha}
\end{document}